\newtheorem{thm}{Theorem}
\newtheorem{defn}{Definition}
\newtheorem{corollary}{Corollary}
\begin{document}

\begin{frontmatter}



\title{Convolution theorems for the free metaplectic transformation and its application}


\author{Hui Zhao$^{a,b}$}
\author{Bing-Zhao Li$^{a,b}$\corref{mycorrespondingauthor}}
\cortext[mycorrespondingauthor]{Corresponding author}\ead{li\_bingzhao@bit.edu.cn}

\address{$^{a}$School of Mathematics and Statistics, Beijing Institute of Technology, Beijing 100081, China}
\address{$^{b}$Beijing Key Laboratory on MCAACI, Beijing Institute of Technology, Beijing 100081, China}

\begin{abstract}
	
 The free metaplectic transformation (FMT) is widely used in many fields such as filter design, pattern recognition, image processing and optics. In order to obtain a more concise and intuitive convolution form, this paper studies two kinds of new convolution theorems in the FMT domain. First, based on the expression of the generalized translation, we derive the convolution theorem of the first kind in the FMT domain, which has elegance and simplicity comparable to the classical results of Fourier transform (FT). Second, we again give the convolution theorem of the second kind in the FMT domain to further study the diversity of convolution theory. It has the advantage that it can be represented by a simple integral and is easy to implement in multiplying filter designs. Finally, based on the simple form of the above convolution theorems, we discuss multiplicative filters in the MFT domain.
\end{abstract}

\begin{keyword}
Free metaplectic transformation \sep Convolution theorems \sep Filter design
\end{keyword}

\end{frontmatter}


\section{Introduction}
The free metaplectic transformation (FMT) plays an important role in many fields of optics and signal processing \cite{Z. C. Zhang2016Jan,Z. C. Zhang2019,T. K. Garg2021,M. Gosson2006}. FMT was first proposed in \cite{G. B. Folland1989}, and it is a generalization of many widely used signal processing operations, including Fourier transform (FT), fractional Fourier transform (FRFT), linear canonical transform (LCT) and Fresnel transform (FRT). In spirit, FMT is similar to the well-known LCT, therefore, it is also called N-dimensional (N-D) nonseparable LCT. However, unlike conventional LCTs, FMTs are generated by a general $2N\times2N$ free symplectic matrix $\mathbf{M}$ with $N(2N+1)$ degrees of freedom \cite{J. J. Healy2016}. Due to these additional degrees of freedom, FMT is widely used in many fields such as filter design, pattern recognition, image processing, quantum mechanics, optics and electromagnetic wave propagation analysis \cite{R. Jing2020,F. A. Shah2020,Z. C. Zhang2021,F. A. Shah2021,F. A. Shah2022}.

It is well known that convolution theory plays a central role in signals, images and optics, especially in the design and implementation of multiplicative filters \cite{Q. Feng2016,L. Durak2009,X. N. Xu2012,R. J. Marks1993,Q. Xiang2014,W. W. Ma2017,W. C. Sun2022}. In recent years, many generalizations of the classical convolution theory have been made to extend it to a wider field, and a large number of convolution theorems on LCT have been obtained \cite{B. Deng2006,D. Wei2009,D. Wei2012,D. Wei2012 Circuits,J. Shi2014,D. Wei2016,Z. C. Zhang2016}. The above results are all dealing with one-dimensional (1-D) signal problems. Although some scholars are currently studying related theories in multi-dimensional signal space such as LCT convolution and sampling, their research results are still immature. In recent years, findings in the literature have been published aiming to exploit the advantages of LCT to analyze multi-dimensional signals \cite{D. Y. Wei2019,D. Y. Wei2014}. However, most of the existing multi-dimensional LCT studies are in the form of kernel, which are essentially the product of $N$ copies of a usually 1-D LCT kernel. These studies lack the inherent multi-dimensional nature of LCTs. Considering the pure and complex multi-dimensional structure of LCTs, we utilize a pure multi-dimensional kernel derived from a general $2N\times2N$ real symplectic matrix $\mathbf{M}$ to initiate a specialized study in the field of multi-dimensional signals, called for FMT. At present, the theories are still in the development stage in the FMT field, such as convolution, sampling, uncertainty principle, and so on. Therefore, it is of great theoretical significance to study the convolution theorem in the FMT field.

The convolutional theory of FMT is a very interesting topic and more scholars are needed to explore this issue. Obviously, the convolution theorem result of classical FT states that the traditional convolution of two signals is equivalent to the product of signal FTs. \cite{F. A. Shah2021} proposed a new  convolution theorem in the FMT domain based on the definition of N-D FT, but in terms of the form of the convolution theorem, it is more complicated than the classical FT domain convolution theorem, and it is also difficult to apply. F. A. Shah \cite{F. A. Shah2022} generalized the FT classical convolution to the FMT domain, and the derived convolution theorem also contains an additional chip multiplier terms. In order to obtain a more concise and intuitive convolution theory, we use the transform kernel of FMT to propose two kinds of convolution structures, and the results are as elegant and simple as possible compared to the FT. The advantage of this two kinds of convolution structures is that the result is simple, not affected by any chip multiplier terms, and easy to implement.

The contribution of this paper is mainly divided into four parts:
\begin{enumerate}[(1)]
\item 
The innovation of this study is that most of the existing multi-dimensional signal research adopts the form of transform kernel, which is essentially the product of $N$ copies of the usual 1-D transform kernel. However, this paper specifically uses a pure multi-dimensional kernel derived from a real symplectic matrix $\mathbf{M}$ of general $2N\times2N$ to study convolution theory in the field of multi-dimensional signals.
\item
Based on the expression of generalized translation, we derive the convolution theorem of the first kind in the FMT domain. The advantage is that it has elegance and simplicity comparable to the classical results of FT, the disadvantage is that the generalized convolution expression is a triple integral form, and its transformation into a single integral form is very complicated.
\item
Due to the complexity of triple integrals, we again propose the convolution theorem of the second kind in the FMT domain. It has the advantage that it can be represented by a simple integral and is easy to implement in multiplying filter designs. However, it cannot fully maintain the elegance and simplicity comparable to the classical results of FT, with the addition of a contraction factor of $\frac{1}{\sqrt{2}}$ in the FMT domain.
\item  
The proposed two kinds of convolution theorems in this paper are expressed in the form of a simple product of two FMTs in the FMT domain, and are not affected by any chip multiplier terms, although the result of convolution theorem of the second kind has a contraction factor of $\frac{1}{\sqrt{2}}$. Based on the simplicity of this result, we present the design process of the multiplicative filter.
\end{enumerate}

The paper is organized as follows. Section \ref{Preli} provides some basic knowledge of the FMT and convolution theory. In Section \ref{Two}, two kinds of convolution theorems are given in the FMT domain. In Section \ref{Filter}, based on the above two kinds of convolution theorems, we give a multiplicative filter design in the FMT domain. Section \ref{Con} concludes the article.

\section{Preliminaries}
\label{Preli}
In this section, we mainly review some basic facts regarding FMT, generalized translation and general framework of convolution theory, which will be needed throughout the
paper.
 
\subsection{Free metaplectic transformation}  
\label{FMT}
Firstly, we give some necessary background and natation on the FMT .

The FMT of a signal $f(\mathbf{t})$ with a symplectic matrix $\mathbf{M}=\begin{pmatrix} A & B \\ C & D \end{pmatrix}$, where $\det(B)\neq0$,  is defined as \cite{Z. C. Zhang2019,Z. C. Zhang2021}
\begin{align}
	\begin{split}
		\mathcal{L}_{\mathbf{M}}\left[ f(\mathbf{t})\right] (\mathbf{u})=\widehat{f_{\mathbf{M}}}(\mathbf{u})=\int_{\mathbb{R}^{N}}f(\mathbf{t})\mathcal{K}_{\mathbf{M}}(\mathbf{t},\mathbf{u})\rm{d}\mathbf{t},
		\label{3}	
	\end{split}
\end{align}
where the kernel function $	\mathcal{K}_{\mathbf{M}}(\mathbf{t},\mathbf{u})$ is given by
\begin{align}
	\begin{split}
		\mathcal{K}_{\mathbf{M}}(\mathbf{t},\mathbf{u})=\dfrac{1}{|\det(B)|^{\frac{1}{2}}}e^{\pi i\left(\mathbf{u}DB^{-1}\mathbf{u}^{T}+\mathbf{t}B^{-1}A\mathbf{t}^{T} \right)-2\pi i\mathbf{t}B^{-1}\mathbf{u}^{T} },
		\label{4}	
	\end{split}
\end{align}
where $\mathbf{u}=(u_{1},u_{2},\cdots,u_{N})$, $\mathbf{t}=(t_{1},t_{2},\cdots,t_{N})$, and $A=(a_{kl})$, $B=(b_{kl})$, $C=(c_{kl})$, $D=(d_{kl})$ are all $N\times N$ real matrices satisfying the following constraints:
\begin{align}
	\begin{split}
		AB^{T}=BA^{T},\: CD^{T}=DC^{T},\: AD^{T}-BC^{T}=I_{N},\notag	
	\end{split}
\end{align} 
and where $I_{N}$ denotes an $N\times N$ identity matrix. 

The corresponding inverse formula is given by 
\begin{align}
	\begin{split}
	    f(\mathbf{t})=&\mathcal{L}_{\mathbf{M}^{-1}}\left[\widehat{f_{\mathbf{M}}}(\mathbf{u}) \right] (\mathbf{t})\\
	    =&\dfrac{1}{|\det(B)|^{\frac{1}{2}}}\int_{\mathbb{R}^{N}}F_{\mathbf{M}}(\mathbf{u})e^{-\pi i \left( \mathbf{u}B^{-T}D^{T}\mathbf{u}^{T}+\mathbf{t}A^{T}B^{-T}\mathbf{t}^{T}\right) +2\pi i\mathbf{u}B^{-T}\mathbf{t}^{T} } \rm{d}\mathbf{u},
		\label{5}	
	\end{split}
\end{align}
where $\mathbf{M}^{-1}=\begin{pmatrix} D^{T} & -B^{T} \\ -C^{T} & A^{T} \end{pmatrix}$.

The free metaplectic transformation has $N(2N+1)$ degrees of freedom \cite{J. J. Healy2016}. It can be reduced to some well-known integral transformations for specific combinations of blocks found in the symplectic matrix, such as the N-D FT, N-D LCT, N-D FRFT and N-D FRT, which are summarized in Table~\ref{Tab:1}.

\begin{table*}
	\centering
	\caption{Some of the specific cases of the FMT}
	\label{Tab:1}  
	\resizebox{\textwidth}{!}{
	\begin{tabular}{l l}
		\hline\hline\noalign{\smallskip}	
		Free metaplectic matrix $\mathbf{M}=\begin{pmatrix} A & B \\ C & D \end{pmatrix}$ & Free metaplectic transformation  \\
		\noalign{\smallskip}\hline\noalign{\smallskip}
		 $A=D=\mathbf{0}$, $B=-C=I_{N}$  & N-D FT  \\[1ex]
	    $A=diag(a_{11},\cdots,a_{NN})$,
		$B=diag(b_{11},\cdots,b_{NN})$, 
		& N-D separable  LCT \\  $C=diag(c_{11},\cdots,c_{NN})$, $D=diag(d_{11},\cdots,d_{NN})$ \\[1ex]
	    $A=D=diag(\cos\alpha_{1},\cdots,\cos\alpha_{N})$,  & N-D separable FRFT  \\
	    $B=-C=diag(\sin\alpha_{1},\cdots,\sin\alpha_{N})$ \\[1ex]
	    $A=D=I_{N}\cos\alpha$, $B=C=I_{N}\sin\alpha$  & N-D non-separable FRFT  \\[1ex]
	    $A=D=I_{N}$, $B=diag(b_{11},\cdots,b_{NN})$, $C=\mathbf{0}$  & N-D separable FRT  \\ [1ex]
	    $A=D=I_{N}$, $C=\mathbf{0}$  & N-D non-separable FRT  \\
	  \noalign{\smallskip}\hline	
	\end{tabular}}
\end{table*}

\subsection{Generalized translation and general framework of convolution theory}
\label{Generalized}
In the general framework of convolution theory \cite{A. I. Zayed1996,A. I. Zayed1998}, most of the existing convolution theories of integral transforms are not the most concise and intuitive compared with the convolution theory of classical FT, so the general convolution and related theory are proposed \cite{J. Shi2012,D. Wei2009,D. Wei2012,D. Wei2012 Circuits,J. Shi2014,D. Wei2016}. Let's review the related content.
 

If we consider a general signal transform and its Fourier-type inverse give by  
\begin{align}
	\begin{split}
		f(\mathbf{t}) =\int_{\mathbb{R}^{N}}\rho(\mathbf{u})F(\mathbf{u})\mathcal{K}(\mathbf{u},\mathbf{t})\rm{d}\mathbf{u},
		\label{7}	
	\end{split}
\end{align}
\begin{align}
	\begin{split}
		F(\mathbf{u})=\int_{\mathbb{R}^{N}}\rho(\mathbf{t})f(\mathbf{t})\mathcal{K}^{\ast}(\mathbf{u},\mathbf{t})\rm{d}\mathbf{t},
		\label{8}	
	\end{split}
\end{align}
then the $\boldsymbol{\tau}-$generalized translation of the $f(\mathbf{t})$ denoted by $f(\mathbf{t}\theta\boldsymbol{\tau})$ is give by \cite{R. J. Marks1993}
\begin{align}
	\begin{split}
		f(\mathbf{t}\theta\boldsymbol{\tau})=\int_{\mathbb{R}^{N}}\rho(\mathbf{u})F(\mathbf{u})\mathcal{K}(\mathbf{u},\mathbf{t})\mathcal{K}^{\ast}(\mathbf{u},\boldsymbol{\tau})\rm{d}\mathbf{u},
		\label{9}	
	\end{split}
\end{align}
where $\theta$ in the argument of the function $f(\mathbf{t}\theta\boldsymbol{\tau})$ is the generalized delay operator for the generalized translation, $\mathcal{K}(\mathbf{u},\mathbf{t})$ is the kernel of the transform, $\mathcal{K}^{\ast}(\mathbf{u},\boldsymbol{\tau})$ stands for the complex conjugater of $\mathcal{K}(\mathbf{u},\boldsymbol{\tau})$, and $\rho(\mathbf{u})$ is the weight function.

The shift property for N-D FT expressed below
\begin{align}
	\begin{split}
		f(\mathbf{t}-\boldsymbol{\tau})\xleftrightarrow{FT}F(\mathbf{u})e^{i\mathbf{u}\boldsymbol{\tau}^{T}},
		\label{09}	
	\end{split}
\end{align}
is consistent with the above definition of the generalized translation as can be easily verified.

Based on the expression of generalized translation, many generalized convolution theorems based on various transformation kernels have been proposed \cite{J. Shi2012,D. Wei2009,D. Wei2016}. However, the above contents are all discussed in the field of 1-D signals, and the generalized transformation of multi-dimensional signal space is not known yet.

\section{Two kinds of convolution theorems for the FMT }
\label{Two}

In this section, we present two new classes of convolution theorems in the FMT domain. The expressions of the new convolution theorems in the FMT domain are all simple product forms of two FMTs, and are not affected by any chip multiplier terms. They have elegance and resemblance to classical FT results.

\subsection{Convolution theorem of the first kind}
\label{first kind}
For the special case of the FMT, let the weight function $\rho(\mathbf{u})$ taken as unity and the kernel taken equal to the kernel of the MFT given by (\ref{4}), the generalized translation in (\ref{9}) can be expressed as follows:
\begin{align}
	\begin{split}
		f(\mathbf{t}\theta\boldsymbol{\tau})=&\int_{\mathbb{R}^{N}}\rho(\mathbf{u})\widehat{f_{\mathbf{M}}}(\mathbf{u})\mathcal{K}_{\mathbf{M}}(\mathbf{t},\mathbf{u})\mathcal{K}^{\ast}_{\mathbf{M}}(\mathbf{t},\mathbf{u})\rm{d}\mathbf{u}\\
		=& \dfrac{1}{|\det(B)|}\int_{\mathbb{R}^{N}}\widehat{f_{\mathbf{M}}}(\mathbf{u})e^{\pi i\left(\mathbf{u}DB^{-1}\mathbf{u}^{T}+\boldsymbol{\tau}B^{-1}A\boldsymbol{\tau}^{T} \right)-2\pi i\boldsymbol{\tau}B^{-1}\mathbf{u}^{T} }\\
		&\times e^{-\pi i\left(\mathbf{u}DB^{-1}\mathbf{u}^{T}+\mathbf{t}B^{-1}A\mathbf{t}^{T} \right)+2\pi i\mathbf{t}B^{-1}\mathbf{u}^{T} } \rm{d}\mathbf{u}\\
		=&\dfrac{1}{|\det(B)|}\int_{\mathbb{R}^{N}}\widehat{f_{\mathbf{M}}}(\mathbf{u})e^{\pi i\left(\boldsymbol{\tau}B^{-1}A\boldsymbol{\tau}^{T}-\mathbf{t}B^{-1}A\mathbf{t}^{T}\right)+2\pi i \left( \mathbf{t}-\boldsymbol{\tau} \right)B^{-1}\mathbf{u}^{T} }\rm{d}\mathbf{u}.
		\label{10}	
	\end{split}
\end{align}
The function defined by formula (\ref{10}) is called the $\boldsymbol{\tau}-$generalized translation $f(\mathbf{t}\theta\boldsymbol{\tau})$ of the signal $f(\mathbf{t})$ based on FMT.

First, we introduce a new concept of a generalized convolution operator $\star^{\mathbf{M}}$ related to a symplectic matrix $\mathbf{M}$, the proposed new definition is a generalized version of \cite{D. Wei2009}.

\begin{defn}
Let $f(\mathbf{t}\theta\boldsymbol{\tau})$ denotes the $\boldsymbol{\tau}-$generalized translation of $f(\mathbf{t})$ in the FMT domain, and the free metaplectic convolution is denoted by $\star^{\mathbf{M}}$ and is defined as
\begin{align}
	\begin{split}
	\left( f\star^{\mathbf{M}}g\right)(\mathbf{t})=&\int_{\mathbb{R}^{N}}f(\boldsymbol{\tau})g(\mathbf{t}\theta\mathbf{\boldsymbol{\tau}})e^{\pi i\left[\mathbf{u}\left( DB^{-1}-B^{-T}D^{T}\right) \mathbf{u}^{T} \right] }\\
	&\times e^{\pi i\left[\mathbf{t}\left( B^{-1}A-A^{T}B^{-T}\right) \mathbf{t}^{T} \right]-2\pi i\left(  \mathbf{t}B^{-1}\mathbf{u}^{T}-\mathbf{u}B^{-T}\mathbf{t}^{T}\right) }\rm{d}\boldsymbol{\tau}.
	\label{12}	
	\end{split}
\end{align}
\end{defn} 	

Based on the generalized free metaplectic convolution operator $\star^{\mathbf{M}}$, we derive the generalized convolution theorem associated with the FMT as follows.
\begin{thm}
Let $z(\mathbf{t})=\left( f\star^{\mathbf{M}} g \right)(\mathbf{t})$, $Z_{\mathbf{M}}$, $F_{\mathbf{M}}$ and $G_{\mathbf{M}}$ denote the FMT of $z(\mathbf{t})$, $f(\mathbf{t})$ and $g(\mathbf{t})$, respectively. Then, we have 
	\begin{align}
		\begin{split}
			Z_{\mathbf{M}}(\mathbf{u})=F_{\mathbf{M}}(\mathbf{u})\cdot G_{\mathbf{M}}(\mathbf{u}),
			\label{13}	
		\end{split}
	\end{align}	
and 
\begin{align}
	\begin{split}
		z(\mathbf{t})=\mathcal{L}_{\mathbf{M}^{-1}}\left[  F_{{\mathbf{M}}} (\mathbf{u}) G_{\mathbf{M}}(\mathbf{u}) \right]   (\mathbf{t}).
		\label{013}
	\end{split}
\end{align}
Moreover
\begin{align}
	\begin{split}
		\mathcal{L}_{\mathbf{M}}\left[ f(\mathbf{t})\cdot g(\mathbf{t}) \right] (\mathbf{u})=\left(F_{\mathbf{M}}\;\star^{\mathbf{M}}\; G_{\mathbf{M}}\right) (\mathbf{u}).
		\label{18}	
	\end{split}
\end{align}	
\end{thm}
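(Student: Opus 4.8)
The plan is to prove \eqref{18} as the product-to-convolution counterpart of the convolution theorem \eqref{13}, using as the central tool the resolution-of-identity property of the free metaplectic kernel. Before touching the integrals I would record two structural consequences of the symplectic constraints: $DB^{-1}=B^{-T}D^{T}$ and $B^{-1}A=A^{T}B^{-T}$, so that the quadratic phases in \eqref{4} are genuine symmetric forms, the inverse kernel in \eqref{5} is exactly $\overline{\mathcal{K}_{\mathbf{M}}(\mathbf{t},\mathbf{u})}$, and moreover $\mathbf{t}B^{-1}\mathbf{u}^{T}=\mathbf{u}B^{-T}\mathbf{t}^{T}$ as scalars. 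A pleasant byproduct is that the three exponential factors appended in the convolution definition \eqref{12} all have vanishing exponents and hence collapse to $1$; this is precisely the reason \eqref{18} can be free of any chirp multiplier. From the same relations I would derive the orthogonality identity $\int_{\mathbb{R}^{N}}\mathcal{K}_{\mathbf{M}}(\mathbf{t},\mathbf{u})\overline{\mathcal{K}_{\mathbf{M}}(\mathbf{t}',\mathbf{u})}\,\mathrm{d}\mathbf{u}=\delta(\mathbf{t}-\mathbf{t}')$, which after the substitution $\mathbf{w}^{T}=B^{-1}\mathbf{u}^{T}$ (Jacobian $|\det B|$, cancelling the $1/|\det B|$ prefactor) reduces to the standard Fourier representation of the Dirac delta, the residual chirp being $1$ on its support.

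With these in hand I would expand the right-hand side of \eqref{18} directly. Writing $\left(F_{\mathbf{M}}\star^{\mathbf{M}}G_{\mathbf{M}}\right)(\mathbf{u})$ by \eqref{12} and replacing the generalized translation by its integral form \eqref{10} produces an iterated integral containing $F_{\mathbf{M}}$ and $G_{\mathbf{M}}$ paired against the kernel. I would then substitute $F_{\mathbf{M}}=\mathcal{L}_{\mathbf{M}}[f]$ and $G_{\mathbf{M}}=\mathcal{L}_{\mathbf{M}}[g]$ from \eqref{3}, so that every factor is expressed through $f$, $g$ and kernels, and interchange the order of integration by Fubini (valid for $f,g$ in $L^{1}\cap L^{2}$ or Schwartz). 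The inner frequency integral is then a kernel pairing of exactly the form governed by the orthogonality identity above; collapsing it with the delta identifies the two spatial arguments and leaves the single integral $\int_{\mathbb{R}^{N}}f(\mathbf{t})g(\mathbf{t})\mathcal{K}_{\mathbf{M}}(\mathbf{t},\mathbf{u})\,\mathrm{d}\mathbf{t}$, which is precisely $\mathcal{L}_{\mathbf{M}}[f\cdot g](\mathbf{u})$, establishing \eqref{18}.

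As an independent check I would run the short structural route. Since the inverse FMT is itself an FMT with matrix $\mathbf{M}^{-1}$, applying the convolution theorem \eqref{13} in the $\mathbf{M}^{-1}$ picture to the pair $F_{\mathbf{M}},G_{\mathbf{M}}$ and using $\mathcal{L}_{\mathbf{M}^{-1}}[F_{\mathbf{M}}]=f$, $\mathcal{L}_{\mathbf{M}^{-1}}[G_{\mathbf{M}}]=g$ from the inversion formula \eqref{5} yields $\mathcal{L}_{\mathbf{M}}[f\cdot g]$ as a metaplectic convolution of the transforms. I would use this to confirm the constant and the absence of a residual phase, taking care to check that the generalized translation entering $\star^{\mathbf{M}}$ is interpreted consistently in the two pictures, so that the operator appearing is genuinely $\star^{\mathbf{M}}$ and not its $\mathbf{M}^{-1}$ analogue.

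The main obstacle I anticipate is the correct handling of the generalized translation when its argument is a frequency-domain function such as $G_{\mathbf{M}}$: unlike the convolution theorem \eqref{13}, where the translation acts on $g$ in the spatial variable, here one must verify that the same defining integral \eqref{10}, applied in the transform variable, pairs back against the forward kernel to reproduce $g$ rather than a doubly transformed copy of it. Making this consistent — and confirming along the way that, thanks to the symmetry relations, every surviving quadratic phase is independent of the variable killed by the delta so that the collapse is clean — is the step I would treat most carefully; the remaining manipulations are routine Gaussian and Fourier bookkeeping.
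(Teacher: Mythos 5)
Your structural observations are correct, and sharper than anything the paper makes explicit: the symplectic constraints do give $DB^{-1}=B^{-T}D^{T}$ and $B^{-1}A=A^{T}B^{-T}$, so the extra exponential factors in the definition (\ref{12}) of $\star^{\mathbf{M}}$ are identically $1$, and the kernel orthogonality relation you isolate is exactly the mechanism behind the paper's computation (the paper never states it as a lemma; it instead recognizes the inversion formula (\ref{5}) inside the iterated integral, which is the same delta collapse). For (\ref{18}) your expand--Fubini--collapse route is essentially the paper's proof.

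Two genuine gaps remain. First, the theorem also asserts (\ref{13}) and (\ref{013}), and your proposal never proves them: you only invoke (\ref{13}) as an available tool in your ``independent check.'' The paper proves (\ref{13}) first, by inserting the spatial generalized translation (\ref{10}) into (\ref{12}), applying $\mathcal{L}_{\mathbf{M}}$, and recognizing (\ref{5}); (\ref{013}) then follows from invertibility. Your machinery handles this, but it must actually be written down. Second, and more substantively, the step ``replace the generalized translation by its integral form (\ref{10})'' fails as stated for $G_{\mathbf{M}}(\mathbf{u}\theta\boldsymbol{\tau})$: formula (\ref{10}) expands a spatial-domain function against the forward kernel and that function's own FMT, so applied literally to $G_{\mathbf{M}}$ it introduces the FMT of $G_{\mathbf{M}}$ --- precisely the doubly transformed copy you worry about --- and the frequency integral then produces $\mathcal{L}_{\mathbf{M}^{-1}}\left[\mathcal{L}_{\mathbf{M}}[F_{\mathbf{M}}]\,\mathcal{L}_{\mathbf{M}}[G_{\mathbf{M}}]\right]$ rather than $\mathcal{L}_{\mathbf{M}}[f\cdot g]$. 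The fix is not a ``verification'' of (\ref{10}) but a different convention: the generalized translation of a frequency-domain function must be taken with respect to the inverse kernel $\mathcal{K}_{\mathbf{M}^{-1}}$, i.e.\ one applies the general definition (\ref{9}) to the transform pair $(\widehat{g_{\mathbf{M}}},g)$ under $\mathcal{L}_{\mathbf{M}^{-1}}$, which is exactly what the paper does in (\ref{010})--(\ref{19}) and which makes $G_{\mathbf{M}}(\mathbf{u}\theta\boldsymbol{\tau})$ an integral of $g$ itself. Your $\mathbf{M}^{-1}$-picture check sees the same issue from the other side: what genuinely appears on the right of (\ref{18}) is the convolution built from the $\mathbf{M}^{-1}$ translation, which the paper folds into the symbol $\star^{\mathbf{M}}$ only because the chirp factors in (\ref{12}) vanish. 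Until you commit to that asymmetric interpretation of $\theta$, the proof of (\ref{18}) stalls at the step you yourself flag as the main obstacle.
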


\begin{proof} 
 From (\ref{10}), it follows that	
 \begin{align}
 	\begin{split}
 		g(\mathbf{t}\theta\boldsymbol{\tau})=\dfrac{1}{|\det(B)|}\int_{\mathbb{R}^{N}}G_{\mathbf{M}}(\mathbf{u})e^{\pi i\left(\boldsymbol{\tau}B^{-1}A\boldsymbol{\tau}^{T}-\mathbf{t}B^{-1}A\mathbf{t}^{T}\right)+2\pi i \left( \mathbf{t}-\boldsymbol{\tau} \right)B^{-1}\mathbf{u}^{T} }\rm{d}\mathbf{u}.
 		\label{14}	
 	\end{split}
 \end{align}
Using (\ref{12}) and (\ref{14}), we can derive the following result
\begin{align}
	\begin{split}
		\left( f\star^{\mathbf{M}}g\right)(\mathbf{t})=&\int_{\mathbb{R}^{N}}f(\boldsymbol{\tau})\dfrac{1}{|\det(B)|}\int_{\mathbb{R}^{N}}G_{\mathbf{M}}(\mathbf{u})e^{\pi i\left(\boldsymbol{\tau}B^{-1}A\boldsymbol{\tau}^{T}-\mathbf{t}B^{-1}A\mathbf{t}^{T}\right)}\\
		&\times e^{2\pi i \left( \mathbf{t}-\boldsymbol{\tau} \right)B^{-1}\mathbf{u}^{T}}{\rm{d}\mathbf{u}}\;\times e^{\pi i\left[\mathbf{u}\left( DB^{-1}-B^{-T}D^{T}\right) \mathbf{u}^{T} \right] }\\
		&\times e^{\pi i\left[\mathbf{t}\left( B^{-1}A-A^{T}B^{-T}\right) \mathbf{t}^{T} \right]-2\pi i\left(  \mathbf{t}B^{-1}\mathbf{u}^{T}-\mathbf{u}B^{-T}\mathbf{t}^{T}\right) }\rm{d}\boldsymbol{\tau}.
		\label{15}	
	\end{split}
\end{align}
According to the definition of the FMT, we get
\begin{align}
	\begin{split}
		&Z_{\mathbf{M}}(\mathbf{u})\\
		=&\dfrac{1}{|\det(B)|^{\frac{1}{2}}}\int_{\mathbb{R}^{N}}z(\mathbf{t})e^{\pi i\left(\mathbf{u}DB^{-1}\mathbf{u}^{T}+\mathbf{t}B^{-1}A\mathbf{t}^{T} \right)-2\pi i\mathbf{t}B^{-1}\mathbf{u}^{T} }\rm{d}\mathbf{t}\\
		=&\dfrac{1}{|\det(B)|^{\frac{1}{2}}}\int_{\mathbb{R}^{N}}\int_{\mathbb{R}^{N}}f(\boldsymbol{\tau})\dfrac{1}{|\det(B)|}\int_{\mathbb{R}^{N}}G_{\mathbf{M}}(\mathbf{u})e^{\pi i\left(\boldsymbol{\tau}B^{-1}A\boldsymbol{\tau}^{T}-\mathbf{t}B^{-1}A\mathbf{t}^{T}\right)}\\
		&\times e^{2\pi i \left( \mathbf{t}-\boldsymbol{\tau} \right)B^{-1}\mathbf{u}^{T}} {\rm{d}\mathbf{u}}\times e^{\pi i\left[\mathbf{u}\left( DB^{-1}-B^{-T}D^{T}\right) \mathbf{u}^{T}  +\mathbf{t}\left( B^{-1}A-A^{T}B^{-T}\right) \mathbf{t}^{T}\right]}\\
		&\times e^{-2\pi i\left(  \mathbf{t}B^{-1}\mathbf{u}^{T}-\mathbf{u}B^{-T}\mathbf{t}^{T}\right) }{\rm{d}\boldsymbol{\tau}}\times e^{\pi i\left(\mathbf{u}DB^{-1}\mathbf{u}^{T}+\mathbf{t}B^{-1}A\mathbf{t}^{T} \right)-2\pi i\mathbf{t}B^{-1}\mathbf{u}^{T} }\rm{d}\mathbf{t}.
		\label{117}	
	\end{split}
	\end{align}
From the FMT’s inverse formula, that is
\begin{align}
	\begin{split}
		f(\mathbf{t})=\dfrac{1}{|\det(B)|^{\frac{1}{2}}}\int_{\mathbb{R}^{N}}F_{\mathbf{M}}(\mathbf{u})e^{-\pi i \left( \mathbf{u}B^{-T}D^{T}\mathbf{u}^{T}+\mathbf{t}A^{T}B^{-T}\mathbf{t}^{T}\right) +2\pi i\mathbf{u}B^{-T}\mathbf{t}^{T} } \rm{d}\mathbf{u}.
		\label{126}	
	\end{split}
\end{align}
Hence, by (\ref{117}) and (\ref{126}), we have
\begin{align}
	\begin{split}
		Z_{\mathbf{M}}(\mathbf{u})=&\left\lbrace \dfrac{1}{|\det(B)|^{\frac{1}{2}}}\int_{\mathbb{R}^{N}}f(\boldsymbol{\tau})e^{\pi i\left(\mathbf{u}DB^{-1}\mathbf{u}^{T}+\boldsymbol{\tau}B^{-1}A\boldsymbol{\tau}^{T} \right)-2\pi i\boldsymbol{\tau}B^{-1}\mathbf{u}^{T} }\rm{d}\boldsymbol{\tau}\right\rbrace \\
		&\times \left\lbrace \dfrac{1}{|\det(B)|^{\frac{1}{2}}}\int_{\mathbb{R}^{N}}G_{\mathbf{M}}(\mathbf{u})e^{-\pi i \left( \mathbf{u}B^{-T}D^{T}\mathbf{u}^{T}+\mathbf{t}A^{T}B^{-T}\mathbf{t}^{T}\right) +2\pi i\mathbf{u}B^{-T}\mathbf{t}^{T} } \rm{d}\mathbf{u}\right\rbrace \\
		&\times \dfrac{1}{|\det(B)|^{\frac{1}{2}}}\int_{\mathbb{R}^{N}}e^{\pi i\left(\mathbf{u}DB^{-1}\mathbf{u}^{T}+\mathbf{t}B^{-1}A\mathbf{t}^{T} \right)-2\pi i\mathbf{t}B^{-1}\mathbf{u}^{T} }\rm{d}\mathbf{t}\\
		&=F_{\mathbf{M}}(\mathbf{u})\left\lbrace \dfrac{1}{|\det(B)|^{\frac{1}{2}}}\int_{\mathbb{R}^{N}}g(\mathbf{t})e^{\pi i\left(\mathbf{u}DB^{-1}\mathbf{u}^{T}+\mathbf{t}B^{-1}A\mathbf{t}^{T} \right)-2\pi i\mathbf{t}B^{-1}\mathbf{u}^{T} }\rm{d}\mathbf{t}\right\rbrace \\
		&=F_{\mathbf{M}}(\mathbf{u})\cdot G_{\mathbf{M}}(\mathbf{u}).	
		\label{17}	
	\end{split}
\end{align}	
In view of (\ref{17}) and the reversible property of FMT, we get
\begin{align}
	\begin{split}
		z(\mathbf{t})=\mathcal{L}_{\mathbf{M}^{-1}}\left[   F_{{\mathbf{M}}} (\mathbf{u}) G_{\mathbf{M}}(\mathbf{u}) \right]   (\mathbf{t}).\notag
	\end{split}
\end{align}	
The proof of (\ref{13}) and (\ref{013}) is completed. The proof of (\ref{18}) is given below. \\
According to (\ref{8}), we obtain	
\begin{align}
	\begin{split}
		\widehat{f_{\mathbf{M}}}(\mathbf{u}\theta\boldsymbol{\tau})=&\int_{\mathbb{R}^{N}}\rho(\mathbf{t})f(\mathbf{t})\mathcal{K}_{\mathbf{M}^{-1}}(\mathbf{t},\mathbf{u})\mathcal{K}^{\ast}_{\mathbf{M}^{-1}}(\mathbf{t},\boldsymbol{\tau})\rm{d}\mathbf{t}\\
		=& \dfrac{1}{|\det(B)|}\int_{\mathbb{R}^{N}}f(\mathbf{t})e^{-\pi i\left(\boldsymbol{\tau}B^{-T}D^{T}\boldsymbol{\tau}^{T}+\mathbf{t}A^{T}B^{-T}\mathbf{t}^{T} \right)+2\pi i\boldsymbol{\tau}B^{-T}\mathbf{t}^{T} }\\
		&\times e^{\pi i\left(\mathbf{u}B^{-T}D^{T}\mathbf{u}^{T}+\mathbf{t}A^{T}B^{-T}\mathbf{t}^{T} \right)-2\pi i\mathbf{u}B^{-T}\mathbf{t}^{T} } \rm{d}\mathbf{t}\\
		=&\dfrac{1}{|\det(B)|}\int_{\mathbb{R}^{N}}f(\mathbf{t})e^{\pi i\left(\mathbf{u}B^{-T}D^{T}\mathbf{u}^{T}-\boldsymbol{\tau}B^{-T}D^{T}\boldsymbol{\tau}^{T}\right) +2\pi i \left( \boldsymbol{\tau}-\mathbf{u} \right) B^{-T}\mathbf{t}^{T} }\rm{d}\mathbf{t}.
		\label{010}	
	\end{split}
\end{align}
From (\ref{010}), it follows that
\begin{align}
	\begin{split}
		G_{\mathbf{M}}(\mathbf{u}\theta\boldsymbol{\tau})=&\,\widehat{g_{\mathbf{M}}}(\mathbf{u}\theta\boldsymbol{\tau})\\
		=&\dfrac{1}{|\det(B)|}\int_{\mathbb{R}^{N}}g(\mathbf{t})e^{\pi i\left(\mathbf{u}B^{-T}D^{T}\mathbf{u}^{T}-\boldsymbol{\tau}B^{-T}D^{T}\boldsymbol{\tau}^{T}\right) +2\pi i \left( \boldsymbol{\tau}-\mathbf{u} \right) B^{-T}\mathbf{t}^{T} }\rm{d}\mathbf{t}.
		\label{19}	
	\end{split}
\end{align}
Using (\ref{12}) and (\ref{19}), we can derive the following result	
\begin{align}
	\begin{split}
		&\left(F_{\mathbf{M}}\;\star^{\mathbf{M}}\; G_{\mathbf{M}}\right) (\mathbf{u})\\
		=&\int_{\mathbb{R}^{N}}F_{\mathbf{M}}(\boldsymbol{\tau})G_{\mathbf{M}}(\mathbf{u}\theta\mathbf{\boldsymbol{\tau}})e^{\pi i\left[\mathbf{u}\left( DB^{-1}-B^{-T}D^{T}\right) \mathbf{u}^{T}+\mathbf{t}\left( B^{-1}A-A^{T}B^{-T}\right) \mathbf{t}^{T} \right] }\\
		&\times e^{-2\pi i\left(  \mathbf{t}B^{-1}\mathbf{u}^{T}-\mathbf{u}B^{-T}\mathbf{t}^{T}\right) }\rm{d}\boldsymbol{\tau}\\
		=&\int_{\mathbb{R}^{N}}F_{\mathbf{M}}(\boldsymbol{\tau})\dfrac{1}{|\det(B)|}\int_{\mathbb{R}^{N}}g(\mathbf{t})e^{\pi i\left(\mathbf{u}B^{-T}D^{T}\mathbf{u}^{T}-\boldsymbol{\tau}B^{-T}D^{T}\boldsymbol{\tau}^{T}\right) }\\
		&\times e^{2\pi i \left( \boldsymbol{\tau}-\mathbf{u} \right) B^{-T}\mathbf{t}^{T} }{\rm{d}\mathbf{t}}\times e^{\pi i\left[\mathbf{u}\left( DB^{-1}-B^{-T}D^{T}\right) \mathbf{u}^{T}+\mathbf{t}\left( B^{-1}A-A^{T}B^{-T}\right) \mathbf{t}^{T} \right] }\\
		&\times e^{-2\pi i\left(  \mathbf{t}B^{-1}\mathbf{u}^{T}-\mathbf{u}B^{-T}\mathbf{t}^{T}\right) }\rm{d}\boldsymbol{\tau}\\
		=&\left\lbrace \dfrac{1}{|\det(B)|^{\frac{1}{2}}}\int_{\mathbb{R}^{N}}F_{\mathbf{M}}(\boldsymbol{\tau})e^{-\pi i \left( \boldsymbol{\tau}B^{-T}D^{T}\boldsymbol{\tau}^{T}+\mathbf{t}A^{T}B^{-T}\mathbf{t}^{T}\right) +2\pi i\boldsymbol{\tau}B^{-T}\mathbf{t}^{T} } \rm{d}\boldsymbol{\tau}\right\rbrace \\
		&\times \dfrac{1}{|\det(B)|^{\frac{1}{2}}}\int_{\mathbb{R}^{N}}g(\mathbf{t})e^{\pi i\left(\mathbf{u}DB^{-1}\mathbf{u}^{T}+\mathbf{t}B^{-1}A\mathbf{t}^{T} \right)-2\pi i\mathbf{t}B^{-1}\mathbf{u}^{T} }\rm{d}\mathbf{t}\\
		=&\dfrac{1}{|\det(B)|^{\frac{1}{2}}}\int_{\mathbb{R}^{N}}f(\mathbf{t})g(\mathbf{t})e^{\pi i\left(\mathbf{u}DB^{-1}\mathbf{u}^{T}+\mathbf{t}B^{-1}A\mathbf{t}^{T} \right)-2\pi i\mathbf{t}B^{-1}\mathbf{u}^{T} }\rm{d}\mathbf{t} \\
		=&\mathcal{L}_{\mathbf{M}}\left[ f(\mathbf{t})\cdot g(\mathbf{t}) \right] (\mathbf{u}).
		\label{20}	
	\end{split}
\end{align}		
\end{proof}	

For the matrix $\mathbf{M}=(A,B;C,D)$, we can obtain different convolution theories by choosing appropriate matrix parameters, as well as the convolution theorems corresponding to all integral transformations shown in Table~\ref{Tab:1}. The special cases of Theorem 1 in the FT, FRFT and LCT domains are given below.

\begin{corollary}
	For $A=D=\mathbf{0}$ and $B=-C=I_{N}$, the (\ref{13}) reduces to the ordinary convolution theorem for the classical N-D FT $F(\mathbf{u})$,	
	\begin{align}
		\begin{split}
			\int_{\mathbb{R}^{N}}f(\boldsymbol{\tau})g(\mathbf{t}-\boldsymbol{\tau}){\rm{d}\boldsymbol{\tau}}\xleftrightarrow{FT}F(\mathbf{u})\:G(\mathbf{u})
		\label{21}
		\end{split}
	\end{align}
\end{corollary}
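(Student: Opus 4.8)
The plan is to show that the hypothesis $A=D=\mathbf{0}$, $B=-C=I_{N}$ collapses every ingredient of Theorem~1 to its classical Fourier counterpart. First I would record that these blocks do satisfy the symplectic constraints, namely $AB^{T}=BA^{T}=\mathbf{0}$, $CD^{T}=DC^{T}=\mathbf{0}$, and $AD^{T}-BC^{T}=\mathbf{0}-I_{N}(-I_{N})=I_{N}$, so that Theorem~1 genuinely applies. Then I would substitute these blocks into the kernel~(\ref{4}). Since $\det(B)=\det(I_{N})=1$, the normalizing factor is unity; moreover $DB^{-1}=\mathbf{0}$ and $B^{-1}A=\mathbf{0}$ annihilate both chirp phases, while $B^{-1}=I_{N}$ reduces the cross term to $\mathbf{t}\mathbf{u}^{T}$. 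Hence $\mathcal{K}_{\mathbf{M}}(\mathbf{t},\mathbf{u})=e^{-2\pi i\mathbf{t}\mathbf{u}^{T}}$, so $\widehat{f_{\mathbf{M}}}(\mathbf{u})=F(\mathbf{u})$ is precisely the N-D FT, and likewise $G_{\mathbf{M}}=G$ and $Z_{\mathbf{M}}=Z$.

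Next I would specialize the generalized translation~(\ref{10}) and the convolution~(\ref{12}). In~(\ref{10}) the factor $e^{\pi i(\boldsymbol{\tau}B^{-1}A\boldsymbol{\tau}^{T}-\mathbf{t}B^{-1}A\mathbf{t}^{T})}$ becomes $1$ because $B^{-1}A=\mathbf{0}$, leaving $f(\mathbf{t}\theta\boldsymbol{\tau})=\int_{\mathbb{R}^{N}}F(\mathbf{u})e^{2\pi i(\mathbf{t}-\boldsymbol{\tau})\mathbf{u}^{T}}\,d\mathbf{u}$, which by the inverse N-D FT is exactly the ordinary shift $f(\mathbf{t}-\boldsymbol{\tau})$. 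In~(\ref{12}) the two quadratic matrices both vanish, $DB^{-1}-B^{-T}D^{T}=\mathbf{0}$ and $B^{-1}A-A^{T}B^{-T}=\mathbf{0}$, and the cross phase $\mathbf{t}B^{-1}\mathbf{u}^{T}-\mathbf{u}B^{-T}\mathbf{t}^{T}$ reduces to $\mathbf{t}\mathbf{u}^{T}-\mathbf{u}\mathbf{t}^{T}$, which vanishes since each summand is a scalar equal to its own transpose. Thus all exponential factors collapse to $1$ and $(f\star^{\mathbf{M}}g)(\mathbf{t})=\int_{\mathbb{R}^{N}}f(\boldsymbol{\tau})g(\mathbf{t}-\boldsymbol{\tau})\,d\boldsymbol{\tau}$ is the classical convolution.

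Finally I would invoke~(\ref{13}): with the identifications above it reads $Z(\mathbf{u})=F(\mathbf{u})G(\mathbf{u})$, and combining this with the two reductions of the previous paragraph yields exactly~(\ref{21}). I expect the only genuinely delicate point to be the vanishing of the cross phase $\mathbf{t}\mathbf{u}^{T}-\mathbf{u}\mathbf{t}^{T}$; it is easy to overlook, but it follows immediately from the scalar-transpose identity $\mathbf{t}\mathbf{u}^{T}=(\mathbf{t}\mathbf{u}^{T})^{T}=\mathbf{u}\mathbf{t}^{T}$. Everything else is a direct substitution, so the proof amounts to verifying that the chirp and contraction machinery of the FMT degenerates correctly at the Fourier point of parameter space.
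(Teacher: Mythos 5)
Your proposal is correct and follows exactly the route the paper intends: the corollary is stated as a direct specialization of Theorem~1, and your substitution of $A=D=\mathbf{0}$, $B=-C=I_{N}$ into the kernel~(\ref{4}), the generalized translation~(\ref{10}), and the convolution~(\ref{12}) — including the observation that the cross phase vanishes because $\mathbf{t}\mathbf{u}^{T}=\mathbf{u}\mathbf{t}^{T}$ as scalars — is precisely the verification the paper leaves implicit.
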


\begin{corollary}
	For $A=D=diag(\cos\alpha_{1},\cdots,\cos\alpha_{N})$ and $B=-C=diag(\sin\alpha_{1},\\ \cdots,\sin\alpha_{N})$, the (\ref{13}) reduces to the convolution theorem for the N-D FRFT $F_{\alpha}(\mathbf{u})$,	
	\begin{align}
		\begin{split}
			\int_{\mathbb{R}^{N}}f(\boldsymbol{\tau})g(\mathbf{t}\theta\mathbf{\boldsymbol{\tau}}){\rm{d}\boldsymbol{\tau}}\xleftrightarrow{FRFT}F_{\alpha}(\mathbf{u})\:G_{\alpha}(\mathbf{u})
		\label{22}
		\end{split}
	\end{align}
\end{corollary}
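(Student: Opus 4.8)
The plan is to specialize the general convolution identity (\ref{13}) by inserting the diagonal blocks $A=D=\mathrm{diag}(\cos\alpha_{1},\dots,\cos\alpha_{N})$ and $B=-C=\mathrm{diag}(\sin\alpha_{1},\dots,\sin\alpha_{N})$ and to show that every chirp factor in the convolution operator (\ref{12}) collapses to unity, so that the operator reduces to the bare integral appearing on the left of (\ref{22}). First I would record the structural fact that drives everything: all four blocks are diagonal, hence symmetric and mutually commuting, and $B$ is invertible precisely when $\sin\alpha_{k}\neq 0$ for every $k$, with $B^{-1}=B^{-T}=\mathrm{diag}(1/\sin\alpha_{1},\dots,1/\sin\alpha_{N})$.

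Next I would evaluate the three matrix combinations that occur in the exponent of (\ref{12}). Because diagonal matrices commute and satisfy $X^{T}=X$, one immediately gets $DB^{-1}-B^{-T}D^{T}=DB^{-1}-B^{-1}D=\mathbf{0}$ and $B^{-1}A-A^{T}B^{-T}=B^{-1}A-AB^{-1}=\mathbf{0}$. For the remaining cross term I would use that $\mathbf{u}B^{-T}\mathbf{t}^{T}$ is a scalar equal to its own transpose $\mathbf{t}B^{-1}\mathbf{u}^{T}$ (since $B^{-1}$ is symmetric), so that $\mathbf{t}B^{-1}\mathbf{u}^{T}-\mathbf{u}B^{-T}\mathbf{t}^{T}=0$ as well. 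Consequently all three exponentials in (\ref{12}) equal $e^{0}=1$, and the free metaplectic convolution degenerates to
\begin{align*}
\left(f\star^{\mathbf{M}}g\right)(\mathbf{t})=\int_{\mathbb{R}^{N}}f(\boldsymbol{\tau})\,g(\mathbf{t}\theta\boldsymbol{\tau})\,\mathrm{d}\boldsymbol{\tau},
\end{align*}
which is exactly the integral on the left-hand side of (\ref{22}). I would also point out that the $\boldsymbol{\tau}$-generalized translation $g(\mathbf{t}\theta\boldsymbol{\tau})$ does \emph{not} further collapse to an ordinary shift here, since $B^{-1}A=\mathrm{diag}(\cot\alpha_{k})\neq\mathbf{0}$; this is precisely why the statement retains the $\theta$-translation rather than the plain $g(\mathbf{t}-\boldsymbol{\tau})$ of the FT corollary (\ref{21}).

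Finally I would invoke Table~\ref{Tab:1} to identify the FMT $\mathcal{L}_{\mathbf{M}}$ under these parameters with the N-D separable FRFT, so that $F_{\mathbf{M}}=F_{\alpha}$, $G_{\mathbf{M}}=G_{\alpha}$, and $Z_{\mathbf{M}}$ is the FRFT of $z$. Substituting these identifications into (\ref{13}) turns $Z_{\mathbf{M}}(\mathbf{u})=F_{\mathbf{M}}(\mathbf{u})\cdot G_{\mathbf{M}}(\mathbf{u})$ into $z(\mathbf{t})\xleftrightarrow{FRFT}F_{\alpha}(\mathbf{u})\,G_{\alpha}(\mathbf{u})$, which is (\ref{22}). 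The only step requiring genuine care — the main obstacle, though a mild one — is the vanishing of the three exponents; it rests entirely on the symmetry and commutativity of diagonal matrices together with the scalar-transpose identity for the bilinear cross term, so no new estimate or integral evaluation is needed beyond what Theorem 1 already provides.
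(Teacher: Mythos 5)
Your proposal is correct and matches the paper's intent exactly: the paper states this corollary without a separate proof, treating it as a direct specialization of Theorem~1, and your argument---substituting the diagonal blocks, observing that $DB^{-1}-B^{-T}D^{T}$, $B^{-1}A-A^{T}B^{-T}$ and the scalar cross term $\mathbf{t}B^{-1}\mathbf{u}^{T}-\mathbf{u}B^{-T}\mathbf{t}^{T}$ all vanish so that $\star^{\mathbf{M}}$ collapses to $\int_{\mathbb{R}^{N}}f(\boldsymbol{\tau})g(\mathbf{t}\theta\boldsymbol{\tau})\,\mathrm{d}\boldsymbol{\tau}$, then identifying $\mathcal{L}_{\mathbf{M}}$ with the N-D separable FRFT via Table~\ref{Tab:1}---is precisely the substitution the authors intend, carried out with more care than the paper itself (your remarks on $\det B=\prod_{k}\sin\alpha_{k}\neq 0$ and on why the $\theta$-translation does not reduce to an ordinary shift are both accurate). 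One small observation worth noting: the vanishing of those three exponents does not actually require diagonality, since the symplectic constraints $AB^{T}=BA^{T}$ and $B^{T}D=D^{T}B$ already force $B^{-1}A=A^{T}B^{-T}$ and $DB^{-1}=B^{-T}D^{T}$ for every admissible $\mathbf{M}$, so your "main obstacle" holds in full generality.
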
	
	
\begin{corollary}
	For  $A=diag(a_{11},\cdots,a_{NN})$, $B=diag(b_{11},\cdots,b_{NN})$, $C=diag(c_{11},\cdots,c_{NN})$ and $D=diag(d_{11},\cdots,d_{NN})$, the (\ref{13}) reduces to the generalized convolution theorem for the N-D LCT $F_{(A,B,C,D)}(\mathbf{u})$,	
	\begin{align}
		\begin{split}
			\int_{\mathbb{R}^{N}}f(\boldsymbol{\tau})g(\mathbf{t}\theta\mathbf{\boldsymbol{\tau}}){\rm{d}\boldsymbol{\tau}}\xleftrightarrow{LCT}F_{(A,B,C,D)}(\mathbf{u})\:G_{(A,B,C,D)}(\mathbf{u})
		\label{23}
		\end{split}
	\end{align}
\end{corollary}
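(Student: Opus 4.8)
The plan is to specialize every ingredient of Theorem~1 to diagonal blocks and observe two simplifications: the FMT collapses to the $N$-D separable LCT, and the generalized convolution operator $\star^{\mathbf{M}}$ loses all of its chirp factors. First I would substitute $A=\mathrm{diag}(a_{kk})$, $B=\mathrm{diag}(b_{kk})$, $C=\mathrm{diag}(c_{kk})$, $D=\mathrm{diag}(d_{kk})$ into the kernel (\ref{4}). Since each of $B^{-1}$, $DB^{-1}$ and $B^{-1}A$ is then diagonal, the quadratic forms decouple coordinatewise, namely $\mathbf{u}DB^{-1}\mathbf{u}^{T}=\sum_{k}(d_{kk}/b_{kk})u_{k}^{2}$, $\mathbf{t}B^{-1}A\mathbf{t}^{T}=\sum_{k}(a_{kk}/b_{kk})t_{k}^{2}$, $\mathbf{t}B^{-1}\mathbf{u}^{T}=\sum_{k}t_{k}u_{k}/b_{kk}$, and $|\det(B)|=\prod_{k}|b_{kk}|$. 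Hence $\mathcal{K}_{\mathbf{M}}(\mathbf{t},\mathbf{u})$ factors as a product of $N$ one-dimensional LCT kernels with parameters $(a_{kk},b_{kk},c_{kk},d_{kk})$, so $\mathcal{L}_{\mathbf{M}}$ is exactly the $N$-D separable LCT of Table~\ref{Tab:1}, and $F_{\mathbf{M}}=F_{(A,B,C,D)}$, $G_{\mathbf{M}}=G_{(A,B,C,D)}$, $Z_{\mathbf{M}}=Z_{(A,B,C,D)}$.

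The second and decisive step is to simplify the convolution (\ref{12}). The cross term is harmless for any $B$: since $\mathbf{u}B^{-T}\mathbf{t}^{T}$ is a scalar it equals its own transpose $\mathbf{t}B^{-1}\mathbf{u}^{T}$, so $\mathbf{t}B^{-1}\mathbf{u}^{T}-\mathbf{u}B^{-T}\mathbf{t}^{T}=0$ identically. The two quadratic chirps are where the diagonal hypothesis enters: diagonal matrices are symmetric and mutually commuting, so $A^{T}=A$, $D^{T}=D$, $B^{-T}=B^{-1}$, whence $DB^{-1}-B^{-T}D^{T}=DB^{-1}-DB^{-1}=\mathbf{0}$ and $B^{-1}A-A^{T}B^{-T}=B^{-1}A-AB^{-1}=\mathbf{0}$. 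Therefore both exponentials reduce to $1$, and (\ref{12}) collapses to $(f\star^{\mathbf{M}}g)(\mathbf{t})=\int_{\mathbb{R}^{N}}f(\boldsymbol{\tau})\,g(\mathbf{t}\theta\boldsymbol{\tau})\,\mathrm{d}\boldsymbol{\tau}$, which is precisely the left-hand side of (\ref{23}).

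Finally I would invoke the identity (\ref{13}) of Theorem~1, which holds for every symplectic $\mathbf{M}$ and hence in particular for the diagonal case. Combining it with the two reductions above yields $\int_{\mathbb{R}^{N}}f(\boldsymbol{\tau})g(\mathbf{t}\theta\boldsymbol{\tau})\,\mathrm{d}\boldsymbol{\tau}\xleftrightarrow{LCT}F_{(A,B,C,D)}(\mathbf{u})\,G_{(A,B,C,D)}(\mathbf{u})$, which is exactly (\ref{23}). The only genuine content lies in the vanishing of the two quadratic correction terms in the second step; this is where a careless argument could fail, since the general operator $\star^{\mathbf{M}}$ was designed precisely to carry these chirps, and they collapse only because diagonal blocks are simultaneously symmetric and commuting. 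No delicate estimate or change of variables is required beyond this observation, and the same mechanism—restricted to $A=D=\mathrm{diag}(\cos\alpha_{k})$, $B=-C=\mathrm{diag}(\sin\alpha_{k})$, and to $A=D=\mathbf{0}$, $B=-C=I_{N}$—recovers Corollaries~2 and~1 as immediate sub-cases.
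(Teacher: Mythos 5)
Your proof is correct and takes essentially the same route as the paper, which states this corollary as an immediate specialization of Theorem~1 with no separate argument: you simply carry out that specialization in detail (kernel factors into 1-D LCT kernels, chirp factors in (\ref{12}) vanish, then invoke (\ref{13})). One small correction to your closing remark: the two quadratic correction terms collapse for \emph{every} free symplectic matrix, not only for diagonal blocks, since the symplectic constraints $AB^{T}=BA^{T}$ and $B^{T}D=D^{T}B$ already force $B^{-1}A=A^{T}B^{-T}$ and $DB^{-1}=B^{-T}D^{T}$ (and the cross term is a scalar minus its own transpose, hence zero, as you note) --- so the diagonal hypothesis is genuinely needed only to identify $\mathcal{L}_{\mathbf{M}}$ with the separable N-D LCT $F_{(A,B,C,D)}$.
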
	

Theorem 1 states that the generalized convolution of two signals in the spatial domain is equivalent to the simple multiplication of their FMTs in the FMT domain. The advantage of this convolution theorem is that it has elegance and simplicity comparable to the classical results of FT, and is easy to implement, especially in filter design. The disadvantage is that the generalized convolution expression proposed in the article is a triple integral form, and it is very complicated to convert it into a single integral form.

\subsection{Convolution theorem of the second kind}
\label{second kind}
Since the generalized convolution in Subsection \ref{first kind} is a triple integral, it is complicated to reduce the expression to a single integral form. Therefore, we again introduce a new convolution structure in this subsection. The advantage of the new convolution structure is that it can be represented by a simple integral in the FMF domain and is easy to implement in multiplicative filter designs.

Below, we can also give a convolution operator $\ast^{\mathbf{M}}$ associated with the FMT.
\begin{defn}
For any two signals $f(\mathbf{t})$ and $g(\mathbf{t})$, let us define the free metaplectic convolution $\ast^{\mathbf{M}}$ by
\begin{align}
	\begin{split}
		\left( f\ast^{\mathbf{M}}g\right)(\mathbf{t})=\dfrac{\sqrt{2}}{|\det(B)|^{\frac{1}{2}}}\int_{\mathbb{R}^{N}}f(\boldsymbol{\tau})g(\sqrt{2}\mathbf{t}-\mathbf{\boldsymbol{\tau}})e^{2\pi i \left(\frac{\mathbf{t}}{\sqrt{2}}-\boldsymbol{\tau} \right)B^{-1}A\left(\frac{\mathbf{t}}{\sqrt{2}}-\boldsymbol{\tau} \right)^{T} }\rm{d}\boldsymbol{\tau}.
		\label{24}	
	\end{split}
\end{align}
\end{defn} 	

A kind of new convolution theorem for FMT is deduced as shown in the following theorem.

\begin{thm}
Let $z(\mathbf{t})=\left( f\ast^{\mathbf{M}} g \right)(\mathbf{t})$, $Z_{\mathbf{M}}$, $F_{\mathbf{M}}$ and $G_{\mathbf{M}}$ denote the FMT of $z(\mathbf{t})$, $f(\mathbf{t})$ and $g(\mathbf{t})$, respectively. Hence
	\begin{align}
		\begin{split}
			Z_{\mathbf{M}}(\mathbf{u})=F_{\mathbf{M}}\left( \frac{\mathbf{u}}{\sqrt{2}}\right) \cdot G_{\mathbf{M}}\left( \dfrac{\mathbf{u}}{\sqrt{2}}\right),
			\label{25}	
		\end{split}
	\end{align}	
	and 
	\begin{align}
		\begin{split}
			z(\mathbf{t})=\mathcal{L}_{\mathbf{M}^{-1}}\left[   F_{\mathbf{M}}\left( \frac{\mathbf{u}}{\sqrt{2}}\right) \cdot G_{\mathbf{M}}\left( \dfrac{\mathbf{u}}{\sqrt{2}}\right) \right](\mathbf{t}).
			\label{26}
		\end{split}
	\end{align}
\end{thm}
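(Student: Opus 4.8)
The plan is to evaluate $Z_{\mathbf{M}}(\mathbf{u}) = \mathcal{L}_{\mathbf{M}}[z](\mathbf{u})$ directly from the defining integral (\ref{3})--(\ref{4}) and to show that, after one change of variables, it factors as the right-hand side of (\ref{25}). First I would insert the convolution integral (\ref{24}) for $z(\mathbf{t}) = (f \ast^{\mathbf{M}} g)(\mathbf{t})$ into the transform, obtaining a double integral over $\boldsymbol{\tau}$ and $\mathbf{t}$ whose integrand carries two phase factors: the chirp $e^{2\pi i(\mathbf{t}/\sqrt{2} - \boldsymbol{\tau})B^{-1}A(\mathbf{t}/\sqrt{2} - \boldsymbol{\tau})^{T}}$ from (\ref{24}) and the kernel phase $e^{\pi i(\mathbf{u}DB^{-1}\mathbf{u}^{T} + \mathbf{t}B^{-1}A\mathbf{t}^{T}) - 2\pi i\mathbf{t}B^{-1}\mathbf{u}^{T}}$ from (\ref{4}), together with the amplitude $f(\boldsymbol{\tau})\,g(\sqrt{2}\mathbf{t} - \boldsymbol{\tau})$.

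The decisive manoeuvre is the substitution $\mathbf{w} = \sqrt{2}\,\mathbf{t} - \boldsymbol{\tau}$ with $\boldsymbol{\tau}$ held fixed, so that $g$ is evaluated at $\mathbf{w}$ and $\mathbf{t} = (\mathbf{w} + \boldsymbol{\tau})/\sqrt{2}$. The Jacobian of this map combines with the prefactor $\sqrt{2}/|\det(B)|^{\frac{1}{2}}$ of (\ref{24}) and the $1/|\det(B)|^{\frac{1}{2}}$ of the kernel to produce the single factor $1/|\det(B)|$ characteristic of a product of two FMTs. Under the substitution one has $\mathbf{t}/\sqrt{2} - \boldsymbol{\tau} = (\mathbf{w} - \boldsymbol{\tau})/2$, so the chirp becomes $e^{(\pi i/2)(\mathbf{w} - \boldsymbol{\tau})B^{-1}A(\mathbf{w} - \boldsymbol{\tau})^{T}}$, while the quadratic kernel term becomes $e^{(\pi i/2)(\mathbf{w} + \boldsymbol{\tau})B^{-1}A(\mathbf{w} + \boldsymbol{\tau})^{T}}$ and the linear kernel term becomes $e^{-\sqrt{2}\,\pi i(\mathbf{w} + \boldsymbol{\tau})B^{-1}\mathbf{u}^{T}}$.

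The crux is then a purely algebraic simplification of the combined phase. Writing $M = B^{-1}A$, I would invoke the parallelogram identity $(\mathbf{w} - \boldsymbol{\tau})M(\mathbf{w} - \boldsymbol{\tau})^{T} + (\mathbf{w} + \boldsymbol{\tau})M(\mathbf{w} + \boldsymbol{\tau})^{T} = 2\mathbf{w}M\mathbf{w}^{T} + 2\boldsymbol{\tau}M\boldsymbol{\tau}^{T}$, in which the mixed terms $\pm(\mathbf{w}M\boldsymbol{\tau}^{T} + \boldsymbol{\tau}M\mathbf{w}^{T})$ cancel identically. The two half-weighted quadratic forms thereby collapse into $\pi i(\mathbf{w}B^{-1}A\mathbf{w}^{T} + \boldsymbol{\tau}B^{-1}A\boldsymbol{\tau}^{T})$; together with $\pi i\,\mathbf{u}DB^{-1}\mathbf{u}^{T}$ and the linear term $-\sqrt{2}\,\pi i(\mathbf{w} + \boldsymbol{\tau})B^{-1}\mathbf{u}^{T}$, the integrand separates completely into a $\boldsymbol{\tau}$-factor and a $\mathbf{w}$-factor. (The symplectic relation $AB^{T} = BA^{T}$, which renders $B^{-1}A$ symmetric, is what makes these kernel phases well posed, although the cancellation above requires no such hypothesis.)

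Finally I would recognise each separated factor as an FMT evaluated at the scaled frequency $\mathbf{u}/\sqrt{2}$: the $\boldsymbol{\tau}$-integral reproduces $F_{\mathbf{M}}(\mathbf{u}/\sqrt{2})$ and the $\mathbf{w}$-integral reproduces $G_{\mathbf{M}}(\mathbf{u}/\sqrt{2})$, noting that the two half-weights $(\pi i/2)\mathbf{u}DB^{-1}\mathbf{u}^{T}$ contributed by the kernels $\mathcal{K}_{\mathbf{M}}(\cdot,\mathbf{u}/\sqrt{2})$ of $F_{\mathbf{M}}$ and $G_{\mathbf{M}}$ sum to the $\pi i\,\mathbf{u}DB^{-1}\mathbf{u}^{T}$ found above, and that $-2\pi i\,\boldsymbol{\tau}B^{-1}(\mathbf{u}/\sqrt{2})^{T} = -\sqrt{2}\,\pi i\,\boldsymbol{\tau}B^{-1}\mathbf{u}^{T}$ matches the linear term exactly. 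This yields (\ref{25}), and (\ref{26}) follows at once by applying $\mathcal{L}_{\mathbf{M}^{-1}}$ via the inversion formula (\ref{5}) to both sides, exactly as in the proof of Theorem 1. I expect the main obstacle to be the exponent bookkeeping of the preceding step---keeping the $1/\sqrt{2}$ scalings consistent and confirming the mixed terms vanish---rather than any analytic difficulty, since all the integrals are absolutely convergent Gaussian-type integrals.
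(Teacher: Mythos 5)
Your proposal is correct and follows essentially the same route as the paper: insert the convolution (\ref{24}) into the FMT integral, perform a $\sqrt{2}$-scaling change of variables in $\mathbf{t}$, and regroup the quadratic phases so the double integral factors into two FMTs at $\mathbf{u}/\sqrt{2}$ --- your single substitution $\mathbf{w}=\sqrt{2}\,\mathbf{t}-\boldsymbol{\tau}$ combined with the parallelogram identity is just a tidier packaging of the paper's $\lambda=\sqrt{2}\,\mathbf{t}$ followed by the implicit shift $\lambda-\boldsymbol{\tau}$ in the inner integral. One shared caveat: both you and the paper treat the Jacobian of the $\sqrt{2}$-scaling as $\sqrt{2}$, which in $\mathbb{R}^{N}$ is actually $2^{N/2}$, so the clean cancellation against the prefactor of (\ref{24}) is exact only for $N=1$; for general $N$ a residual constant $2^{(1-N)/2}$ survives, but this is inherited from the theorem's normalization rather than a flaw introduced by your argument.
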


\begin{proof} 
According to the definition of the FMT and (\ref{24}), we obtain
\begin{align}
	\begin{split}
		&Z_{\mathbf{M}}(\mathbf{u})\\
		=&\dfrac{1}{|\det(B)|^{\frac{1}{2}}}\int_{\mathbb{R}^{N}}z(\mathbf{t})e^{\pi i\left(\mathbf{u}DB^{-1}\mathbf{u}^{T}+\mathbf{t}B^{-1}A\mathbf{t}^{T} \right)-2\pi i\mathbf{t}B^{-1}\mathbf{u}^{T} }\rm{d}\mathbf{t}\\
		=&\dfrac{1}{|\det(B)|^{\frac{1}{2}}}\int_{\mathbb{R}^{N}}\left\lbrace \dfrac{\sqrt{2}}{|\det(B)|^{\frac{1}{2}}}\int_{\mathbb{R}^{N}}f(\boldsymbol{\tau})g(\sqrt{2}\mathbf{t}-\mathbf{\boldsymbol{\tau}})e^{2\pi i \left(\frac{\mathbf{t}}{\sqrt{2}}-\boldsymbol{\tau} \right)B^{-1}A\left(\frac{\mathbf{t}}{\sqrt{2}}-\boldsymbol{\tau} \right)^{T} }\rm{d}\boldsymbol{\tau} \right\rbrace\\ 
		&\times e^{\pi i\left(\mathbf{u}DB^{-1}\mathbf{u}^{T}+\mathbf{t}B^{-1}A\mathbf{t}^{T} \right)-2\pi i\mathbf{t}B^{-1}\mathbf{u}^{T} }\rm{d}\mathbf{t}\\
		=&\dfrac{\sqrt{2}}{|\det(B)|}\int_{\mathbb{R}^{N}}\int_{\mathbb{R}^{N}}f(\boldsymbol{\tau})g(\sqrt{2}\mathbf{t}-\mathbf{\boldsymbol{\tau}}) e^{\pi i\left(\mathbf{u}DB^{-1}\mathbf{u}^{T}+\mathbf{t}B^{-1}A\mathbf{t}^{T} \right)-2\pi i\mathbf{t}B^{-1}\mathbf{u}^{T} }\\
		&\times e^{2\pi i \left[  \frac{\mathbf{t}}{\sqrt{2}} B^{-1}A(\frac{\mathbf{t}}{\sqrt{2}})^{T}-\frac{\mathbf{t}}{\sqrt{2}} B^{-1}A\boldsymbol{\tau}^{T}-\boldsymbol{\tau} B^{-1}A(\frac{\mathbf{t}}{\sqrt{2}})^{T}+\boldsymbol{\tau} B^{-1}A\boldsymbol{\tau}^{T} \right] }\rm{d}\boldsymbol{\tau}\rm{d}\mathbf{t}.
		\label{27}	
	\end{split}
\end{align}
Let $\lambda=\sqrt{2}\mathbf{t}$, the (\ref{27}) becomes
\begin{align}
	\begin{split}
		&Z_{\mathbf{M}}(\mathbf{u})\\
		=&\dfrac{1}{|\det(B)|}\int_{\mathbb{R}^{N}}\int_{\mathbb{R}^{N}}f(\boldsymbol{\tau})g(\lambda-\mathbf{\boldsymbol{\tau}})e^{\pi i\left[ \mathbf{u}DB^{-1}\mathbf{u}^{T}+\frac{\lambda}{\sqrt{2}}B^{-1}A\left(\frac{\lambda}{\sqrt{2}} \right)^{T} \right] -2\pi i\frac{\lambda}{\sqrt{2}}B^{-1}\mathbf{u}^{T} }\\
		&\times e^{2\pi i \left[  \frac{\lambda}{2} B^{-1}A(\frac{\lambda}{2})^{T}-\frac{\lambda}{2} B^{-1}A\boldsymbol{\tau}^{T}-\boldsymbol{\tau} B^{-1}A(\frac{\lambda}{2})^{T}+\boldsymbol{\tau} B^{-1}A\boldsymbol{\tau}^{T} \right] }\rm{d}\boldsymbol{\tau}\rm{d}\lambda\\
		=&\dfrac{1}{|\det(B)|^{\frac{1}{2}}}\int_{\mathbb{R}^{N}}g(\lambda-\mathbf{\boldsymbol{\tau}})e^{\pi i\frac{\mathbf{u}}{\sqrt{2}}DB^{-1}(\frac{\mathbf{u}}{\sqrt{2}})^{T}+\pi i \left(\lambda-\boldsymbol{\tau} \right)B^{-1}A\left(\lambda-\boldsymbol{\tau} \right)^{T}-2\pi i\left(\lambda-\boldsymbol{\tau} \right)B^{-1}\left(\frac{\mathbf{u}}{\sqrt{2}} \right)^{T}}\\
		&\times {\rm{d}\lambda}\:\dfrac{1}{|\det(B)|^{\frac{1}{2}}}\int_{\mathbb{R}^{N}}f(\boldsymbol{\tau})e^{\frac{\pi i}{2}\mathbf{u}DB^{-1}\mathbf{u}^{T}+\pi i\boldsymbol{\tau}B^{-1}A\boldsymbol{\tau}^{T}-2\pi i\boldsymbol{\tau}B^{-1}\left( \frac{\mathbf{u}}{\sqrt{2}}\right)^{T}}\rm{d}\boldsymbol{\tau}\\
		=&G_{\mathbf{M}}\left( \dfrac{\mathbf{u}}{\sqrt{2}}\right)\dfrac{1}{|\det(B)|^{\frac{1}{2}}}\int_{\mathbb{R}^{N}}f(\boldsymbol{\tau})e^{\pi i\frac{\mathbf{u}}{\sqrt{2}}DB^{-1}\left( \frac{\mathbf{u}}{\sqrt{2}}\right) ^{T}+\pi i\boldsymbol{\tau}B^{-1}A\boldsymbol{\tau}^{T}-2\pi i\boldsymbol{\tau}B^{-1}\left( \frac{\mathbf{u}}{\sqrt{2}}\right)^{T}}\rm{d}\boldsymbol{\tau}\\
		=&F_{\mathbf{M}}\left( \dfrac{\mathbf{u}}{\sqrt{2}}\right)G_{\mathbf{M}}\left( \dfrac{\mathbf{u}}{\sqrt{2}}\right).\nonumber 	
	\end{split}
\end{align}
\end{proof} 

Similarly, some of well-known results about the convolution theorem of the second kind in the FT, FRFT and LCT domains are shown to be special cases of our achieved results.

\begin{corollary}
	For $A=D=\mathbf{0}$ and $B=-C=I_{N}$, the (\ref{25}) reduces to the ordinary convolution theorem for the classical N-D FT $F(\mathbf{u})$,	
	\begin{align}
		\begin{split}
			\sqrt{2}\int_{\mathbb{R}^{N}}f(\boldsymbol{\tau})g(\sqrt{2}\mathbf{t}-\boldsymbol{\tau}){\rm{d}\boldsymbol{\tau}}\xleftrightarrow{FT}F(\dfrac{\mathbf{u}}{\sqrt{2}})\:G\left( \dfrac{\mathbf{u}}{\sqrt{2}}\right) 
			\label{21}
		\end{split}
	\end{align}
\end{corollary}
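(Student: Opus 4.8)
The plan is to treat this as a direct specialization of Theorem 2, substituting $A=D=\mathbf{0}$ and $B=-C=I_{N}$ into both the transform kernel and the convolution operator $\ast^{\mathbf{M}}$, and then checking that every metaplectic correction term degenerates to the classical Fourier situation. First I would insert these blocks into the kernel (\ref{4}). Here $\det(B)=\det(I_{N})=1$, so $|\det(B)|^{\frac{1}{2}}=1$, while $B^{-1}=I_{N}$, $DB^{-1}=\mathbf{0}$, and $B^{-1}A=\mathbf{0}$; consequently both quadratic chirp exponents $\mathbf{u}DB^{-1}\mathbf{u}^{T}$ and $\mathbf{t}B^{-1}A\mathbf{t}^{T}$ vanish and the kernel collapses to $\mathcal{K}_{\mathbf{M}}(\mathbf{t},\mathbf{u})=e^{-2\pi i\mathbf{t}\mathbf{u}^{T}}$. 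Thus $F_{\mathbf{M}}(\mathbf{u})=\int_{\mathbb{R}^{N}}f(\mathbf{t})e^{-2\pi i\mathbf{t}\mathbf{u}^{T}}\,d\mathbf{t}=F(\mathbf{u})$ is exactly the N-D Fourier transform, and identically $G_{\mathbf{M}}=G$.

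Next I would simplify the convolution definition (\ref{24}) under the same substitution. Since $B^{-1}A=\mathbf{0}$, the weight $e^{2\pi i(\frac{\mathbf{t}}{\sqrt{2}}-\boldsymbol{\tau})B^{-1}A(\frac{\mathbf{t}}{\sqrt{2}}-\boldsymbol{\tau})^{T}}$ reduces to $1$, and with $|\det(B)|^{\frac{1}{2}}=1$ the prefactor is just $\sqrt{2}$. Hence $(f\ast^{\mathbf{M}}g)(\mathbf{t})=\sqrt{2}\int_{\mathbb{R}^{N}}f(\boldsymbol{\tau})g(\sqrt{2}\mathbf{t}-\boldsymbol{\tau})\,d\boldsymbol{\tau}$, which is precisely the left-hand side of the asserted Fourier pair (\ref{21}).

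Finally I would invoke the conclusion (\ref{25}) of Theorem 2, namely $Z_{\mathbf{M}}(\mathbf{u})=F_{\mathbf{M}}(\frac{\mathbf{u}}{\sqrt{2}})G_{\mathbf{M}}(\frac{\mathbf{u}}{\sqrt{2}})$. Replacing $F_{\mathbf{M}}$ and $G_{\mathbf{M}}$ by the ordinary Fourier transforms identified above gives $Z(\mathbf{u})=F(\frac{\mathbf{u}}{\sqrt{2}})G(\frac{\mathbf{u}}{\sqrt{2}})$, where $Z$ denotes the N-D Fourier transform of $z=f\ast^{\mathbf{M}}g$. This is exactly the relation displayed in (\ref{21}), so the corollary follows.

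There is no substantive difficulty in this argument, since it is entirely a specialization of an already-established theorem; the only point demanding care is the simultaneous bookkeeping of the two chirp exponentials, the determinant prefactor, and the quadratic weight, confirming that all of them degenerate at once precisely because the chosen blocks force $B^{-1}A=DB^{-1}=\mathbf{0}$ and $|\det(B)|=1$. Once this verification is in place, the statement is immediate from Theorem 2.
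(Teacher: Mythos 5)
Your specialization is correct and is exactly the intended argument: the paper states this corollary without proof as an immediate consequence of Theorem 2, and your substitution of $A=D=\mathbf{0}$, $B=-C=I_{N}$ into the kernel (\ref{4}), the convolution (\ref{24}), and the conclusion (\ref{25}) — noting $\det(B)=1$, $DB^{-1}=B^{-1}A=\mathbf{0}$ — is precisely the verification the paper leaves to the reader. No gaps; the bookkeeping of the chirp terms and the $\sqrt{2}$ prefactor all checks out.
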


\begin{corollary}
	For $A=D=diag(\cos\alpha_{1},\cdots,\cos\alpha_{N})$ and $B=-C=diag(\sin\alpha_{1},\\ \cdots,\sin\alpha_{N})$, the (\ref{25}) reduces to the convolution theorem for the N-D FRFT $F_{\alpha}(\mathbf{u})$,	
	\begin{align}
		\begin{split}
			\left( f\ast^{\mathbf{M}} g \right)(\mathbf{t})\xleftrightarrow{FRFT}F_{\alpha}\left( \dfrac{\mathbf{u}}{\sqrt{2}}\right)\:G_{\alpha}\left( \dfrac{\mathbf{u}}{\sqrt{2}}\right)
			\label{22}
		\end{split}
	\end{align}
\end{corollary}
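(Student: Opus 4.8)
The plan is to obtain this corollary as an immediate specialization of Theorem 2, so that no new integration is needed: all the analytic work has already been done in the proof of (\ref{25}), and what remains is to verify that the free metaplectic data degenerate to the N-D separable FRFT and to record the explicit spatial form that the operator $\ast^{\mathbf{M}}$ then takes.

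First I would carry out the block algebra forced by $A=D=\mathrm{diag}(\cos\alpha_{1},\dots,\cos\alpha_{N})$ and $B=-C=\mathrm{diag}(\sin\alpha_{1},\dots,\sin\alpha_{N})$. Whenever each $\sin\alpha_{k}\neq0$ the matrix $B$ is invertible with $B^{-1}=\mathrm{diag}(\csc\alpha_{1},\dots,\csc\alpha_{N})$, and one checks $B^{-1}A=DB^{-1}=\mathrm{diag}(\cot\alpha_{1},\dots,\cot\alpha_{N})$ together with $|\det B|=\prod_{k}|\sin\alpha_{k}|$. Since all blocks are diagonal, the kernel (\ref{4}) separates into a product of $N$ one-dimensional FRFT kernels, which is precisely the N-D separable FRFT entry of Table~\ref{Tab:1}; hence $\widehat{f_{\mathbf{M}}}=F_{\alpha}$ and $\widehat{g_{\mathbf{M}}}=G_{\alpha}$ in the conventions of the paper. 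Substituting these quantities into the definition (\ref{24}) turns the quadratic form in the exponent into $\sum_{k}\cot\alpha_{k}\big(\tfrac{t_{k}}{\sqrt{2}}-\tau_{k}\big)^{2}$ and the constant into $\sqrt{2}\,\prod_{k}|\sin\alpha_{k}|^{-1/2}$, so that
\begin{align*}
(f\ast^{\mathbf{M}}g)(\mathbf{t})=\frac{\sqrt{2}}{\prod_{k}|\sin\alpha_{k}|^{1/2}}\int_{\mathbb{R}^{N}}f(\boldsymbol{\tau})\,g(\sqrt{2}\,\mathbf{t}-\boldsymbol{\tau})\,e^{2\pi i\sum_{k}\cot\alpha_{k}\left(\frac{t_{k}}{\sqrt{2}}-\tau_{k}\right)^{2}}\,\mathrm{d}\boldsymbol{\tau}.
\end{align*}
With this identification in hand I would simply read off the conclusion of Theorem 2: equation (\ref{25}) gives $Z_{\mathbf{M}}(\mathbf{u})=F_{\mathbf{M}}(\mathbf{u}/\sqrt{2})\,G_{\mathbf{M}}(\mathbf{u}/\sqrt{2})$, and replacing the FMT symbols by the FRFT symbols found above yields $F_{\alpha}(\mathbf{u}/\sqrt{2})\,G_{\alpha}(\mathbf{u}/\sqrt{2})$, which is exactly the asserted relation (\ref{22}).

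The only genuine difficulty is bookkeeping rather than mathematics: one must confirm that the amplitude and phase normalization implicit in the standard N-D separable FRFT symbol $F_{\alpha}$ agrees with the restriction of the FMT kernel (\ref{4}), in particular that the constant $|\det B|^{-1/2}=\prod_{k}|\sin\alpha_{k}|^{-1/2}$ and the $\cot\alpha_{k}$, $\csc\alpha_{k}$ phases match term by term. Once this identification is pinned down --- as asserted by the separable FRFT row of Table~\ref{Tab:1} --- the corollary follows from Theorem 2 with no further computation, and the degenerate cases $\sin\alpha_{k}=0$ (where $B$ is singular and the FMT is not defined) are simply excluded.
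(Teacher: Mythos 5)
Your proposal is correct and is essentially the argument the paper intends: the corollary is a direct specialization of Theorem~2, obtained by substituting the diagonal blocks, noting $B^{-1}A=DB^{-1}=\mathrm{diag}(\cot\alpha_{1},\dots,\cot\alpha_{N})$ and $|\det B|=\prod_{k}|\sin\alpha_{k}|$, and identifying the resulting separable kernel with the N-D FRFT per Table~\ref{Tab:1} (the paper states the corollary without proof, treating exactly this substitution as immediate). Your explicit spatial form of $\ast^{\mathbf{M}}$ and your remark excluding $\sin\alpha_{k}=0$ are consistent with the paper's conventions, so no gap remains.
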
	

\begin{corollary}
	For  $A=diag(a_{11},\cdots,a_{NN})$, $B=diag(b_{11},\cdots,b_{NN})$, $C=diag(c_{11},\cdots,c_{NN})$ and $D=diag(d_{11},\cdots,d_{NN})$, the (\ref{25}) reduces to the generalized convolution theorem for the N-D LCT $F_{(A,B,C,D)}(\mathbf{u})$,	
	\begin{align}
		\begin{split}
			\left( f\ast^{\mathbf{M}} g \right)(\mathbf{t})\xleftrightarrow{LCT}F_{(A,B,C,D)}\left( \dfrac{\mathbf{u}}{\sqrt{2}}\right)\:G_{(A,B,C,D)}\left( \dfrac{\mathbf{u}}{\sqrt{2}}\right)
			\label{23}
		\end{split}
	\end{align}
\end{corollary}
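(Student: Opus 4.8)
The plan is to obtain this corollary as an immediate specialization of Theorem 2 rather than by a fresh computation: once the diagonal blocks are shown to form an admissible symplectic matrix and the resulting FMT is identified with the N-D separable LCT, identity (\ref{25}) transfers verbatim.

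First I would check that the blocks $A=diag(a_{11},\cdots,a_{NN})$, $B=diag(b_{11},\cdots,b_{NN})$, $C=diag(c_{11},\cdots,c_{NN})$, $D=diag(d_{11},\cdots,d_{NN})$ genuinely satisfy the symplectic constraints listed after (\ref{4}). Since all four blocks are diagonal they pairwise commute and coincide with their own transposes, so $AB^{T}=BA^{T}$ and $CD^{T}=DC^{T}$ hold trivially, while $AD^{T}-BC^{T}=I_{N}$ collapses to the $N$ scalar identities $a_{kk}d_{kk}-b_{kk}c_{kk}=1$; these are exactly the unit-determinant conditions on the $2\times2$ blocks $\begin{pmatrix}a_{kk}&b_{kk}\\c_{kk}&d_{kk}\end{pmatrix}$. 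The standing hypothesis $\det(B)=\prod_{k}b_{kk}\neq0$ keeps the kernel (\ref{4}) well defined, so $\mathbf{M}$ is an admissible free symplectic matrix to which Theorem 2 applies.

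Next I would verify that in this diagonal regime the FMT kernel factorizes into a product of $N$ one-dimensional LCT kernels, so that $\mathcal{L}_{\mathbf{M}}$ is precisely the N-D separable LCT $F_{(A,B,C,D)}$ recorded in Table~\ref{Tab:1}: writing $DB^{-1}=diag(d_{kk}/b_{kk})$, $B^{-1}A=diag(a_{kk}/b_{kk})$ and $B^{-1}=diag(1/b_{kk})$, each quadratic and bilinear form in (\ref{4}) splits as a sum over the coordinate index $k$. Because $B^{-1}A$ is diagonal, the quadratic form in the convolution (\ref{24}) likewise separates as $\sum_{k}(a_{kk}/b_{kk})\bigl(t_{k}/\sqrt{2}-\tau_{k}\bigr)^{2}$, so the operator $\ast^{\mathbf{M}}$ reduces componentwise to the corresponding separable LCT convolution.

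Finally I would apply Theorem 2 directly: since (\ref{25}) is valid for every symplectic $\mathbf{M}$ with $\det(B)\neq0$, specializing to the diagonal case and rewriting $\mathcal{L}_{\mathbf{M}}$ in the N-D separable LCT notation yields $Z_{\mathbf{M}}(\mathbf{u})=F_{(A,B,C,D)}\bigl(\mathbf{u}/\sqrt{2}\bigr)\,G_{(A,B,C,D)}\bigl(\mathbf{u}/\sqrt{2}\bigr)$, which is exactly (\ref{23}). The only real obstacle is bookkeeping, namely matching the block-matrix FMT notation to the scalar separable-LCT notation and confirming that the contraction $\mathbf{u}\mapsto\mathbf{u}/\sqrt{2}$ and the $\sqrt{2}$ prefactor pass correctly into each one-dimensional component; no computation beyond Theorem 2 is actually required.
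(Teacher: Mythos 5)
Your proposal is correct and matches the paper's treatment: the paper states this corollary as an immediate specialization of Theorem 2 to the diagonal block case, with no separate argument, which is exactly the route you take. Your added verification of the symplectic constraints and the kernel factorization is sound bookkeeping that the paper leaves implicit.
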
	

The advantage of Theorem 2 is that it can be represented by a simple integral and is easy to implement in multiplying filter designs. But it does not fully maintain the elegance and simplicity comparable to the classical results of FT, with the addition of a contraction factor of $\frac{1}{\sqrt{2}}$ in the FMT domain.

The two kinds of convolution theorems in this paper are expressed in the FMT domain as simple product forms of two FMTs, which are not affected by any chirp multiplier terms. Based on this result maintaining the elegance and simplicity comparable to the classical results of FT, we present the design process of the multiplicative filter in the next section.

\section{Filter design in the FMT domain }
\label{Filter}
The filter design theory in the field of multi-dimensional signal analysis is still in its infancy, and it is one of the important research contents in the field of digital signal processing. This section mainly discusses multiplicative filters in the FMT domain, and shows that the new convolution structures are easy to implement in the designing of filters.
\begin{figure}[htbp]
	\centering
	\includegraphics{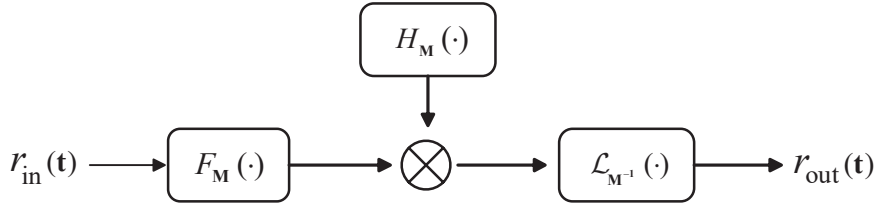}
	\caption{The multiplicative filter in the FMT domain}
\end{figure} 

The multiplicative filter model in the FMT domain is shown in Figure 1. If $F_{\mathbf{M}}\left( \cdot \right) $ is defined as the FMT spectrum of the received signal $r_{\mathrm{in}}(\mathbf{t})$ and $H_{\mathbf{M}}(\cdot)$ is defined as the frequency response of the filter, then the multiplicative effect in Figure 1 can be expressed as
\begin{align}
	\begin{split}
		r_{\mathrm{out}}(\mathbf{t})=\mathcal{L}_{\mathbf{M}^{-1}}\Big(  F_{{\mathbf{M}}}\left[ r_{\mathrm{in}}(\mathbf{t}) \right] (\widetilde{\mathbf{u}})\cdot H_{\mathbf{M}}(\widetilde{\mathbf{u}}) \Big)  (\mathbf{t}),
	\end{split}
\end{align}
where $\widetilde{\mathbf{u}}$ stands for a generalized variable.  From the above analysis, we find that the multiplicative filtering process in the FMT domain basically relies on two steps: first, choose a suitable filter impulse response, multiply it by the MFT spectrum of the input signal $r_{\mathrm{in}}(\mathbf{t})$; second, apply the inverse FMT to the synthesized spectrum for obtaining the desired output signal $r_{\mathrm{out}}(\mathbf{t})$. 

As can be seen from Figure 1, by designing $H_{\mathbf{M}}(\widetilde{\mathbf{u}})$, we can realize many models of multiplicative filters, such as low pass, high pass, band pass, band stop, etc. In this section, we will give an example to explain this. Suppose that the received signal $r_{\mathrm{in}}(\mathbf{t})$ is composed of the desired signal $f(\mathbf{t})$ and the noise $\varepsilon(\mathbf{t})$, namely
\begin{align}
	\begin{split}
		r_{\mathrm{in}}(\mathbf{t})=f(\mathbf{t})+\varepsilon(\mathbf{t}).
	\end{split}
\end{align}
Let $\mathbf{\Omega}_{1}$ and $\mathbf{\Omega}_{2}$ be our region of interest in the MFT domain satisfying $\mathbf{\Omega}_{1}\subseteq\mathbf{\Omega}_{2}$. Assuming a reasonable assumption that the FMT spectra of the signals $r_{\mathrm{in}}(\mathbf{t})$, $f(\mathbf{t})$ and $\varepsilon(\mathbf{t})$ have either no or minimal overlapping. We can use the two kinds of convolution theorems to design a multiplicative filter in the FMT domain to improve the Signal-to-Noise Ratio (SNR), retain the desired signal, and largely discard the noise. 

The \cite{F. A. Shah2021,F. A. Shah2022} give two different convolution theorems for FMT, however, their proposed convolution theorems are not as simple as our result. Hence, in Section \ref{Two}, we show that the convolution of two signals in the spatial domain is equivalent to their simple multiplication in the FMT domain. Our method maintains the same elegance and simplicity as FT and is easier to implement, especially in filter design. According to the two kinds of convolution theorems in Section \ref{Two}, we give the selection method of the multiplicative filter.
\begin{enumerate}[(1)]
	\item 
	By the convolution theorem of the first kind (\ref{13}), it is assumed that the function $G_{\mathbf{M}}(\mathbf{u})$ in (\ref{13}) is the transfer function $H_{\mathbf{M}}(\mathbf{u})$ of the multiplicative filter, the function $H_{\mathbf{M}}(\mathbf{u})$ can be selected as
	\begin{align}
		\begin{split}
			G_{\mathbf{M}}(\mathbf{u})=H_{\mathbf{M}}(\mathbf{u}),
		\end{split}
	\end{align}
	so that it is constant over the domain $\mathbf{\Omega}_{1}$ and is zero or decays rapidly outside $\mathbf{\Omega}_{1}$. 
	\item 
	
	 According to the convolution theorem of the second kind (\ref{25}), with the assumption that function $G_{\mathbf{M}}(\frac{\mathbf{u}}{\sqrt{2}})$ in (\ref{25}), acts as the transfer function of the multiplicative filter, $H_{\mathbf{M}}(\frac{\mathbf{u}}{\sqrt{2}})$ is designed as
	\begin{align}
		\begin{split}
			G_{\mathbf{M}}\left( \frac{\mathbf{u}}{\sqrt{2}}\right) =H_{\mathbf{M}}\left( \frac{\mathbf{u}}{\sqrt{2}}\right) ,
		\end{split}
	\end{align}
	so that it is constant over the domain $\mathbf{\Omega}_{2}$ and is zero or decays rapidly outside $\mathbf{\Omega}_{2}$. 
\end{enumerate}
It is evident from Figure 1 that the output produces only part of the $f(\mathbf{t})$ spectrum, which lies approximately in region $\mathbf{\Omega}_{1}$ or $\mathbf{\Omega}_{2}$. Here, any complexity of domain  $\mathbf{\Omega}_{1}$ or $\mathbf{\Omega}_{2}$ is not considered. Finally, we can exploit the invertibility of the FMT to recover the desired signal $f(\mathbf{t})$. This is obviously easier to implement than the multiplicative filter designed by the convolution theorems proposed in \cite{F. A. Shah2021,F. A. Shah2022}.

\section{Conclusions}
\label{Con}
In this paper, two kinds of new convolution theorems in the FMT domain are studied in detail, and their applications are given. First, based on the generalized translation formulation, we give convolution theorem of the first kind in the FMT domain. It has elegance and simplicity comparable to the classical results of FT, however proposed convolution theorem in the paper is a triple integral form and is computationally complex. Second, to further investigate the diversity of convolution theories, we again give convolution theorem of the second kind in the FMT domain. It has the advantage that it can be represented by a simple 1-D integral and is easy to implement in multiplying filter designs. The two kinds of convolution theorems proposed in this paper are expressed in the FMT domain as simple product forms of two FMTs, and are not affected by any chip product terms. Finally, we discuss multiplicative filters in the FMT domain. In future work, we will consider the sampling theorem of MFT in optics and signal processing.\

\bibliographystyle{elsarticle-num}

\end{document}